\documentclass{amsart}

\theoremstyle{definition}

\theoremstyle{remark}

\numberwithin{equation}{section}



\begin{document}

\title{FREE SPACES OVER SOME PROPER METRIC SPACES}

\author{A. DALET}

\thanks{The first author was partially supported by PHC Barrande 26516YG}

\address{Laboratoire de Math\'ematiques de Besan\c con, CNRS UMR 6623\\
Universit\'e de Franche-Comt\'e, 16 Route de Gray, 25030 Besan\c con Cedex, FRANCE.}
\curraddr{}
\email{aude.dalet@univ-fcomte.fr}




\newtheorem{Prop}[theorem]{Proposition}
\newtheorem{Coro}[theorem]{Corollary}
\newtheorem{Propr}{Property}

\newcommand{\N}{\mathbb{N}}
\newcommand{\Z}{\mathbb{Z}}
\newcommand{\Q}{\mathbb{Q}}
\newcommand{\C}{\mathbb{C}}
\newcommand{\R}{\mathbb{R}}
\newcommand{\F}[1]{\mathcal{F}(#1)}
\newcommand{\norm}[1]{\|#1\|}


\maketitle

 \newtheorem{thm}{Theorem}[section]
 \newtheorem{cor}[thm]{Corollary}
 \newtheorem{lem}[thm]{Lemma}
 \newtheorem{prop}[thm]{Proposition}
 \theoremstyle{definition}
 \newtheorem{defn}[thm]{Definition}
 \theoremstyle{remark}
 \newtheorem{rem}[thm]{Remark}
 \newtheorem*{ex}{Example}
 \numberwithin{equation}{section}



%
%
%
%
%
%
%
%

\begin{abstract}
We prove that the Lipschitz-free space over a countable proper metric space is isometric to a dual space and has the metric approximation property. We also show that the Lipschitz-free space over a proper ultrametric space is isometric to the dual of a space which is isomorphic to $c_0(\mathbb{N})$.
\end{abstract}

\maketitle



\section{Introduction}

For a pointed metric space $(M,d)$, that is a metric space with an origin $0$, we denote $Lip_0(M)$ the space of Lipschitz real-valued functions on $M$ which vanish at $0$. Endowed with the norm defined by the Lipschitz constant, this space is a Banach space. Moreover, its unit ball is compact for the pointwise topology, hence it is a dual space. 

Let $x\in M$ and define $\delta_x\in Lip_0(M)^*$ as follows: for $f\in Lip_0(M)$, $\delta_x(f)=f(x)$. The Lipschitz-free space over $M$, denoted $\F{M}$, is the closed subspace of $Lip_0(M)^*$ spanned by the $\delta_x$'s: $\F{M}:=\overline{\textrm{span}}\{\delta_x, \ x\in M\}$. Its dual space is isometrically isomorphic to $Lip_0(M)$. 

Lipschitz-free spaces are considered in [20], where they are called Arens-Eells spaces. The notation we use is due to Godefroy and Kalton \cite{GK} where they point out that despite the simplicity of the definition of $\F{M}$, it is not easy to study its linear structure. Although their article was published in 2003, still very little is known about Lipschitz-free spaces. One can check that the Lipschitz-free space over $\mathbb{R}$ is $L_1(\R)$, but Naor and Schechtman proved in \cite{NS} that $\F{\R^2}$ is not isomorphic to a subspace of any $L_1$. Moreover, Godard \cite{Go} proved that $\F{M}$ is isometrically isomorphic to a subspace of an $L_1$-space if and only if $M$ isometrically embeds into an $\R$-tree. We will focus on the notion of approximation property.

\medskip

\noindent A Banach space $X$ has the approximation property (AP in short) if for every positive $\varepsilon$, every $K\subset X$ compact, there exists an operator $T$ on $X$, of finite rank, such that for every $x\in K$, the norm $\|Tx-x\|$ is less than $\varepsilon$.

Let $\lambda \in [1,+\infty)$. The space $X$ has the $\lambda$-bounded approximation property ($\lambda$-BAP) if for every positive $\varepsilon$, every $K\subset X$ compact, there exists an operator $T$ on $X$, of finite rank, such that $\|T\|\leq \lambda$ and for every $x\in K$, the norm $\|Tx-x\|$ is less than $\varepsilon$.

Finally, $X$ has the metric approximation property (MAP) when it has the $1$-BAP.

\medskip

\noindent Godefroy and Kalton \cite{GK} proved that a Banach space has the $\lambda$-BAP if and only if its Lipschitz-free space has the $\lambda$-BAP. Lancien and Perneck\`a \cite{LP}  proved that the Lipschitz-free space over a doubling metric space has the BAP and that $\F{\ell_1}$ has a finite-dimensional Schauder decomposition. H\'ajek and Perneck\`a improved this last result, in \cite{HP} they obtained that $\F{\ell_1}$ has a Schauder basis. However, there are not only positive results, Godefroy and Ozawa \cite{GO} constructed a compact metric space $(K,d)$ such that $\F{K}$ fails the AP. But the author proved in \cite{D} that in the case of countable compact metric spaces, the Lipschitz-free space always has the MAP. In this article, we will prove that the Lipschitz-free space over a countable proper metric space and over a proper ultrametric space is a dual space and has the MAP. More precisely, we show that in the case of a proper ultrametric space, the Lipschitz-free space has an isometric predual which is isomorphic to $c_0(\N)$.

\section{Countable proper metric spaces}

A metric space is said to be proper if every closed ball is compact. 

For a metric space $(M,d)$, we will denote by $B(x,r)$ the open ball of center $x\in M$ and radius $r>0$, and by $\overline{B}(x,r)$ the closed ball.

The space $lip_0(M)$ is the subspace of $Lip_0(M)$ of functions $f$ satisfying: 
$$ \forall \varepsilon>0, \exists \delta>0 :\ d(x,y)<\delta \Rightarrow |f(x)-f(y)|\leq \varepsilon d(x,y)$$

\medskip

\noindent The first result of this section is the following:

\begin{thm}\label{properdual} 
Let $M$ be a countable proper metric space and 
$$S=\left\{ f\in lip_0(M);  \ \lim\limits_{r\rightarrow +\infty}\sup\limits_{\substack{x \textrm{\ or\ } y \notin \overline{B}(0,r)\\ x\neq y}}\cfrac{f(x)-f(y)}{d(x,y)}=0  \right\}.$$
Then, $\F{M}$ is isometrically isomorphic to $S^*$.
\end{thm}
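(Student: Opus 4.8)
\emph{Strategy.} I will show that the restriction map
\[
Q\colon\F{M}\longrightarrow S^{*},\qquad Q\mu:=\mu|_{S},
\]
which is well defined and of norm $\le1$ since $S\subseteq Lip_0(M)=\F{M}^{*}$, is a surjective linear isometry. The heart of the matter is the following \emph{norming lemma}: for every $f\in Lip_0(M)$ with $\norm{f}\le1$ there are $f_k\in S$ with $\norm{f_k}\le1$ and $f_k\to f$ pointwise on $M$. Granting it, the isometry is immediate. Since $\{\delta_x:x\in M\}$ is total in $\F{M}$, on bounded subsets of $Lip_0(M)=\F{M}^{*}$ the weak${}^{*}$ topology coincides with pointwise convergence on $M$; hence for each $\mu\in\F{M}$ and each $f$ in the unit ball of $Lip_0(M)$ we get $\langle\mu,f\rangle=\lim_k\langle\mu,f_k\rangle\le\sup_{g\in\overline{B}_S}\langle\mu,g\rangle=\norm{Q\mu}_{S^{*}}$, and taking the supremum over $f$ yields $\norm{\mu}_{\F{M}}\le\norm{Q\mu}_{S^{*}}\le\norm{\mu}_{\F{M}}$. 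In particular $S$ separates the points of $\F{M}$, so $Q$ is an isometric embedding.

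\emph{The norming lemma --- the main obstacle.} The difficulty, and the reason $S$ is defined the way it is, is that $M$ is unbounded, so the information lost by non-compactness has to be carried by the ``flat at infinity'' part of the definition of $S$. Fix $f$ with $\norm{f}\le1$ and a finite set $F\ni0$, and pick $R>0$ with $F\subseteq\overline{B}(0,R)$. The first move, valid over any proper space, is a truncation: with $\varphi(x):=(2R-d(0,x))^{+}$ and $\widetilde f:=\mathrm{med}(-\varphi,f,\varphi)$, one obtains a $1$-Lipschitz function vanishing at $0$, equal to $f$ on $\overline{B}(0,R)\supseteq F$ (because $\abs{f}\le d(0,\cdot)\le R\le\varphi$ there) and supported in $\overline{B}(0,2R)$; a function of bounded support automatically fulfils the flat-at-infinity requirement, since for $x$ far out $\widetilde f(x)=0$ while $d(x,y)$ is then large whenever $\widetilde f(y)\neq0$. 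The second move, which is the genuinely hard, countability-specific step, is to regularize $\widetilde f$ on the \emph{countable compact} set $\overline{B}(0,3R)\cap M$ into a little-Lipschitz function, keeping its norm $\le1$ and its values on $F$, and arranging by a further cut-off that it extends by $0$ to all of $M$ while staying $1$-Lipschitz and little-Lipschitz; this relies on the scattered structure of countable compact metric spaces and is exactly the kind of approximation carried out in \cite{D}. The resulting $g$ belongs to $S$ and agrees with $f$ on $F$, and letting $F$ increase to $M$ produces $(f_k)$.

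\emph{Surjectivity.} To see that $Q$ is onto, I would use the exhaustion $M=\bigcup_n M_n$ by the compact sets $M_n:=\overline{B}(0,n)\cap M$. Each $\F{M_n}$ sits isometrically inside $\F{M}$, so $\F{M}=\overline{\bigcup_n\F{M_n}}$; and since $M_n$ is countable compact, $\F{M_n}$ is isometric to $lip_0(M_n)^{*}$ by \cite{D}. Thus $\F{M}$ is the closure of an increasing union of the duals $lip_0(M_n)^{*}$, the connecting inclusions being the transposes of the restriction metric surjections $lip_0(M_{n+1})\to lip_0(M_n)$ --- exactly the $c_0$/$\ell_\infty$/$\ell_1$ picture, with the inverse limit $\varprojlim lip_0(M_n)$ in the role of $\ell_\infty$. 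The duality attached to such an inductive limit identifies a predual of $\F{M}$ with the subspace of $\varprojlim lip_0(M_n)$ of families ``vanishing in the limit''; reading a compatible family as a function $f$ on $M$, this happens precisely when $\sup\{\frac{f(x)-f(y)}{d(x,y)}:x\neq y,\ \{x,y\}\not\subseteq M_n\}\to0$, and such an $f$ is then automatically little-Lipschitz, so this subspace is exactly $S$, with the pairing given by $Q$. Hence $Q$ is onto, and together with the first step it is a surjective isometry, so $\F{M}$ is isometrically isomorphic to $S^{*}$. The remaining work is routine, apart from the little-Lipschitz extension and regularization lemmas carried out in \cite{D} for the countable compact case, which are what make the argument go.
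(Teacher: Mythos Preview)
Your approach differs substantially from the paper's, and while the overall strategy is reasonable, there are two genuine gaps, both stemming from the same unaddressed issue: extending little-Lipschitz functions from a compact ball to all of $M$ without increasing the Lipschitz norm.

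\emph{The norming lemma with constant $1$.} You claim the regularisation of $\widetilde f$ on $\overline{B}(0,3R)$ can be done ``keeping its norm $\le1$'', citing \cite{D}. But the construction in \cite{D} (reproduced in this paper) yields a separating function with Lipschitz constant bounded by an absolute $c>1$, not by $1$. It is true that the isometric duality $\F{K}=lip_0(K)^{*}$ from \cite{D} implies, a posteriori via Goldstine, that the closed unit ball of $lip_0(K)$ is weak$^{*}$-dense in that of $Lip_0(K)$; but then your extension step is the problem. The ``further cut-off'' you invoke will either destroy the $lip_0$ property (a median with $(3R-d(0,\cdot))^{+}$ is not little-Lipschitz) or inflate the Lipschitz norm (a multiplicative cut-off gives a bound like $\norm{\psi}_\infty\norm{g}_L+\norm{\psi}_L\norm{g}_\infty$). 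You have not explained how to pass from a $1$-Lipschitz element of $lip_0(\overline{B}(0,3R))$ to an element of $S$ with norm still $\le1$. The same gap reappears in your surjectivity argument: you call the restrictions $lip_0(M_{n+1})\to lip_0(M_n)$ ``metric surjections'', but that is precisely the assertion that every $1$-Lipschitz little-Lipschitz function on $M_n$ extends to one on $M_{n+1}$---McShane preserves the norm but not the little-Lipschitz condition. And even granting this, the identification of a predual of $\overline{\bigcup_n\F{M_n}}$ with the ``vanishing in the limit'' subspace of $\varprojlim lip_0(M_n)$ is not a standard result; it is essentially the content of the theorem.

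\emph{What the paper does instead.} The paper sidesteps both difficulties via the Petun{\={\i}}n--Pl{\={\i}}{\v{c}}ko theorem: it suffices that $S$ be a closed subspace of $\F{M}^{*}$ contained in $NA(\F{M})$ (Lemma~\ref{NA}, using properness and the two defining conditions of $S$) and separating (Lemma~\ref{separation}, which only needs uniform separation with \emph{some} constant $c\ge1$). The main work is an explicit construction, for each pair $x\neq y$, of a function $h\in S$ with $|h(x)-h(y)|=d(x,y)$ and $\norm{h}_L\le c$, built directly on $M$ via iterated Cantor--Bendixson derivatives of $\overline{B}(x,\tfrac{3}{2}d(x,y))$---so no extension is ever needed. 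Petun{\={\i}}n--Pl{\={\i}}{\v{c}}ko then delivers both the isometry and the surjectivity simultaneously, and the constant $c$ never has to be improved to $1$.
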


\noindent Before the proof, we need some definitions:

\begin{defn}
\ 
 \begin{enumerate}
  \item Let $X$ be a Banach space. A subspace $F$ of $X^*$ is called separating if $x^*(x)=0$ for all $x^*\in F$ implies $x=0$.
  \item  For $(M,d)$ a pointed metric space, a subspace $F$ of $Lip_0(M)$ separates points uniformly if there exists a constant $c\geq1$ such that for every $x,y\in M$, some $f\in F$ satisfies $\norm{f}_L\leq c$ and $|f(x)-f(y)|=d(x,y)$. 
 \end{enumerate}
\end{defn}
\begin{defn}
Let $X$ be a Banach space. We denote $NA(X)$ the subset of $X^*$ consisting of all linear forms which attain their norm.
\end{defn}

\noindent A result of Petun{\={\i}}n and Pl{\={\i}}{\v{c}}ko \cite{PP} asserts that for a separable Banach space $X$, if a closed subspace $F$ of $X^*$ is separating and is a subset of $NA(X)$, then $X$ is isometrically isomorphic to $F^*$.
To use this result we proceed with a few lemmas about the space $S$.

\begin{lem}\label{NA}
Let $(M,d)$ be proper pointed metric space. The space $S$ is a subspace of $NA(\F{M})$.
\end{lem}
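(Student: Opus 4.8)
The plan is to establish the substantive half of the statement, namely that every $f\in S$ attains its norm as an element of $\F{M}^*=Lip_0(M)$; that $S$ is a linear subspace is immediate, since membership in $lip_0(M)$ and the limit condition in the definition of $S$ are both preserved under linear combinations. Recall that the norm of $f$ seen in $\F{M}^*$ is its Lipschitz constant $L=\norm{f}_L=\sup_{x\neq y}\frac{|f(x)-f(y)|}{d(x,y)}$, and that for $x\neq y$ the element $m_{xy}=\frac{\delta_x-\delta_y}{d(x,y)}$ has norm $1$ in $\F{M}$ with $f(m_{xy})=\frac{f(x)-f(y)}{d(x,y)}$. Thus it suffices to show that the supremum defining $L$ is \emph{attained} at some pair $(a,b)$: after exchanging $a$ and $b$ if necessary we then obtain $f(m_{ab})=L=\norm{f}$, so $f\in NA(\F{M})$. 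Since the case $L=0$ is trivial, assume $L>0$.

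The core of the argument is to trap a maximizing sequence for $L$ inside a fixed compact set and bounded away from the diagonal, which is precisely what the two features of $S$ deliver. First, applying the definition of $lip_0(M)$ with $\varepsilon=L/2$ produces $\delta>0$ such that $d(x,y)<\delta$ forces $|f(x)-f(y)|\leq\tfrac{L}{2}d(x,y)$. Second, because the set of pairs $x\neq y$ with $x\notin\overline{B}(0,r)$ or $y\notin\overline{B}(0,r)$ is symmetric under exchanging $x$ and $y$, the limit condition defining $S$ is equivalent to $\sup\bigl\{\frac{|f(x)-f(y)|}{d(x,y)}:\ x\neq y,\ x\notin\overline{B}(0,r)\text{ or }y\notin\overline{B}(0,r)\bigr\}\to0$ as $r\to+\infty$; fix $r>0$ for which this supremum is at most $\tfrac{L}{2}$. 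Combining these, any pair $(x,y)$ with $\frac{|f(x)-f(y)|}{d(x,y)}>\tfrac{L}{2}$ must satisfy $x,y\in\overline{B}(0,r)$ and $d(x,y)\geq\delta$.

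Finally I would pick $(x_n,y_n)$ with $\frac{|f(x_n)-f(y_n)|}{d(x_n,y_n)}\to L$; for $n$ large the quotient exceeds $\tfrac{L}{2}$, hence $x_n,y_n\in\overline{B}(0,r)$ and $d(x_n,y_n)\geq\delta$. By properness, $\overline{B}(0,r)$ is compact, so after passing to a subsequence $x_n\to a$ and $y_n\to b$ in $\overline{B}(0,r)$. Then $d(a,b)=\lim_n d(x_n,y_n)\geq\delta>0$, so $a\neq b$, and continuity of $f$ and of $d$ yields $\frac{|f(a)-f(b)|}{d(a,b)}=\lim_n\frac{|f(x_n)-f(y_n)|}{d(x_n,y_n)}=L$, the desired attainment. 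The points requiring a little care are the symmetrization remark that lets the one-sided quotient in the definition of $S$ control the absolute value, and the verification that the limiting pair lies off the diagonal; both are dispatched above, and the rest is a routine compactness argument, so I do not expect a genuine obstacle.
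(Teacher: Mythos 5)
Your proof is correct and follows essentially the same route as the paper: the decay condition at infinity confines a maximizing pair to a compact ball, the $lip_0$ condition keeps it away from the diagonal, and properness plus continuity yields attainment, evaluated against the norm-one molecule $\frac{\delta_a-\delta_b}{d(a,b)}$. The only cosmetic difference is that you extract a convergent maximizing sequence, while the paper phrases the same compactness argument via the compact set $\overline{B}^2_{\varepsilon}$ on which the supremum is a maximum.
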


\begin{proof}
Let $f\in S$. We may assume that $f\neq 0$ and take $0<\varepsilon<\frac{\|f\|_L}{2}$. Since 
$$	\lim\limits_{r\rightarrow +\infty}\sup\limits_{\substack{x\textrm{\ or\ }y \notin \overline{B}(0,r)\\ x\neq y}}\cfrac{f(x)-f(y)}{d(x,y)}=0	$$
 there exists $r>0$ such that
$$\sup\limits_{\substack{x\textrm{\ or\ } y \notin \overline{B}(0,r)\\ x\neq y}}\cfrac{|f(x)-f(y)|}{d(x,y)}<\varepsilon.$$
Thus,
$\|f\|_L=\sup\limits_{\substack{x,y\in \overline{B}(0,r)\\ y\neq x}}\cfrac{|f(x)-f(y)|}{d(x,y)}.$

 Because $f\in lip_0(M)$, the set $$\overline{B}^2_{\varepsilon}:=\left\lbrace (x,y)\in \overline{B}(0,r)^2,\ x\neq y,\ |f(x)-f(y)|\geq \varepsilon \  d(x,y)\right\rbrace $$ is compact and we have
\begin{align*}
 \norm{f}_L&=\sup_{\substack{x,y\in \overline{B}(0,r)\\ y\neq x}} \frac{|f(x)-f(y)|}{d(x,y)}=\sup_{(x, y)\in \overline{B}_{\varepsilon}^2} \frac{|f(x)-f(y)|}{d(x,y)}\\
 &=\max_{(x, y)\in \overline{B}_{\varepsilon}^2} \frac{|f(x)-f(y)|}{d(x,y)}.
\end{align*}
Thus, there exist $x\neq y$ such that $\norm{f}_L=\frac{|f(x)-f(y)|}{d(x,y)}$. With $\gamma =\frac{1}{d(x,y)}(\delta_x-\delta_y)$, $\gamma  \in \F{M}$, we obtain $\norm{f}_L=|f(\gamma)| $, with $\norm{\gamma}_{\F{M}}=1$ because $\delta$ is an isometry. Then, $f$ is norm attaining and $S\subset NA(\F{M})$.
\end{proof}

\begin{lem}\label{separation}
Let $(M,d)$ be a proper pointed metric space. If $S$ separates points uniformly, then it is separating.
\end{lem}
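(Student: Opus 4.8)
The plan is to deduce that $S$ is separating from the equivalent statement that $S$ is weak$^{*}$-dense in $Lip_0(M)=\F{M}^{*}$. Indeed, a subspace $F$ of a dual Banach space $Y^{*}$ is weak$^{*}$-dense precisely when its pre-annihilator $\{y\in Y:f(y)=0\text{ for all }f\in F\}$ is trivial, and for $Y=\F{M}$, $F=S$ this pre-annihilator is exactly the set occurring in the definition of ``$S$ separating''. To obtain weak$^{*}$-density it suffices, since the finitely supported elements are norm-dense in $\F{M}$, to prove a finite approximation property: there is a constant $C$, depending only on the uniform separation constant $c$ of $S$, such that for every $g\in Lip_0(M)$, every finite $E\subseteq M$ and every $\varepsilon>0$ there is $\phi\in S$ with $\norm{\phi}_L\le C\norm{g}_L$ and $|\phi(x)-g(x)|\le\varepsilon$ for all $x\in E$. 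The uniform bound on $\norm{\phi}_L$ is what lets one pass from finite approximation to a genuine weak$^{*}$-neighbourhood: one first replaces a prescribed finite family in $\F{M}$ by nearby finitely supported elements and then absorbs the tail, which is now controlled independently of $E$.

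The first structural ingredient is that $S$ is a sublattice of $Lip_0(M)$: if $f,h\in S$ then $f\vee h$ and $f\wedge h$ vanish at $0$, lie in $lip_0(M)$, and keep the decay condition, because pointwise $|f\vee h-f'\vee h'|\le\max(|f-f'|,|h-h'|)$ and similarly for $\wedge$. Lattice operations do not increase the Lipschitz constant, so the classical Stone--Weierstrass lattice manipulation $\phi=\bigvee_{x\in E}\bigwedge_{y\in E}\phi_{x,y}$ carries an $\varepsilon$-control of each $\phi_{x,y}$ at the pair $x,y$ over to an $\varepsilon$-control of $\phi$ at every point of $E$. Hence it is enough to construct, for each pair $x,y\in E$, a function $\phi_{x,y}\in S$ with $\norm{\phi_{x,y}}_L\le C\norm{g}_L$ and $|\phi_{x,y}(x)-g(x)|,\,|\phi_{x,y}(y)-g(y)|\le\varepsilon$; after rescaling we may assume $\norm{g}_L=1$, so the values to be matched satisfy $|g(x)-g(y)|\le d(x,y)$, $|g(x)|\le d(x,0)$ and $|g(y)|\le d(y,0)$.

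The hard part, which I expect to be the main obstacle, is this norm-controlled two-point interpolation inside $S$. Uniform separation provides $h\in S$ with $\norm{h}_L\le c$ and $h(x)-h(y)=\pm d(x,y)$, and a suitable scalar multiple of $h$ matches the \emph{difference} $g(x)-g(y)$ while keeping Lipschitz norm at most $c$; there remains a common additive offset at $x$ and $y$, and since functions of $S$ vanish at the base point and have norm at most $c$ this offset is at most $(1+c)\min(d(x,0),d(y,0))$ in modulus. Correcting it without leaving $S$ and without losing control of the Lipschitz constant is the delicate point: the idea is to combine, through the lattice operations, appropriately scaled uniformly-separating functions attached to the pairs $\{x,0\}$ and $\{y,0\}$, using that the offset is small relative to the distances of $x$ and $y$ to the base point. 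It is here that the particular shape of $S$ is essential — a general uniformly separating \emph{subspace} of $Lip_0$ need not be separating, as one already sees on three-point metric spaces — so the construction must really exploit that $S$ is the full space of locally flat functions satisfying the strong decay condition, which is what makes the required interpolants available. Granting this construction of the $\phi_{x,y}$ with the uniform bound, the lattice combination above produces $\phi$ and finishes the proof.
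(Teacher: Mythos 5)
Your overall strategy is exactly the paper's: reduce ``separating'' to weak$^*$-density of $S$ in $Lip_0(M)=\F{M}^*$ via Hahn--Banach, check that $S$ is stable under $\sup$ and $\inf$, and then obtain weak$^*$-density from a finite interpolation property with a Lipschitz bound depending only on the separation constant, assembled through the sup-inf (Stone--Weierstrass type) combination. The paper does precisely this: it proves the lattice stability of the decay condition in detail and then invokes the proof of Lemma 3.2.3 in Weaver's book to get: there exists $b\ge 1$ such that for every $f\in Lip_0(M)$ and every finite $A\subset M$ containing $0$ there is $g\in S$ with $g_{|A}=f_{|A}$ and $\norm{g}_L\le b\norm{f}_L$, whence the weak$^*$-closure of $S$ is $Lip_0(M)$.

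The genuine gap is that you never prove the step you yourself single out as the crux: the norm-controlled interpolation at a pair of points by an element of $S$. You sketch an idea for killing the ``additive offset'' and then write ``granting this construction'', so the decisive point of the lemma is assumed, not proved. Moreover, the difficulty you describe is largely an artifact of insisting that each two-point interpolant itself lie in $S$ (hence vanish at $0$). The way out — implicit in the paper's appeal to Weaver — is to interpolate on the finite set $A\cup\{0\}$ and allow constants in the building blocks: for each pair $p\neq q$ in $A\cup\{0\}$ pick $h_{pq}\in S$ with $\norm{h_{pq}}_L\le c$ and $|h_{pq}(p)-h_{pq}(q)|=d(p,q)$, and set
$$g_{pq}=f(q)+\frac{f(p)-f(q)}{h_{pq}(p)-h_{pq}(q)}\bigl(h_{pq}-h_{pq}(q)\bigr),$$
which matches $f$ at $p$ and $q$ and satisfies $\norm{g_{pq}}_L\le c\norm{f}_L$. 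The added constants are harmless because the little-Lipschitz condition and the decay condition at infinity only involve differences, and both are preserved under $\sup$ and $\inf$ (the computation you and the paper both make); hence $g=\bigvee_{p}\bigwedge_{q}g_{pq}$ satisfies both conditions, coincides with $f$ on $A\cup\{0\}$ — in particular $g(0)=0$, so $g\in S$ — and has $\norm{g}_L\le c\norm{f}_L$. With this uniform bound, weak$^*$-density and hence separation follow as you indicate. Without this construction (or an explicit appeal to Weaver's Lemma 3.2.3), your argument is incomplete exactly where the lemma's content lies.
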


\begin{proof}
Using Hahn-Banach theorem it is enough to prove that when $S$ separates points uniformly, it is weak$^{*}$-dense in $Lip_0(M)$.

\medskip

\noindent  We will first prove that the condition $$\lim\limits_{r\rightarrow +\infty}\sup\limits_{\substack{x \textrm{\ or\ } y \notin \overline{B}(0,r)\\ x\neq y}}\cfrac{f(x)-f(y)}{d(x,y)}=0$$ is stable under supremum and infimum between two functions.

Let $f,g\in S$ and $x\neq y$ in $M$ such that $x$ or $y$ doesn't belong to $\overline{B}(0,r)$. We assume that $f(x)\leq g(x)$, the other case is similar. We need to distinguish two cases:
\begin{itemize}
\item if $f(y)\leq g(y)$, then 
\begin{align*} 
\cfrac{\inf(f,g)(x)-\inf(f,g)(y)}{d(x,y)}&=\cfrac{f(x)-f(y)}{d(x,y)}  \end{align*}
and
 \begin{align*}
 \cfrac{\sup(f,g)(x)-\sup(f,g)(y)}{d(x,y)}&=\cfrac{g(x)-g(y)}{d(x,y)}
 \end{align*}
\item if $f(y)\geq g(y)$, then 
\begin{align*} 
\cfrac{f(x)-f(y)}{d(x,y)}\leq\cfrac{\inf(f,g)(x)-\inf(f,g)(y)}{d(x,y)}&=\cfrac{f(x)-g(y)}{d(x,y)}\leq \cfrac{g(x)-g(y)}{d(x,y)} \end{align*}
and
 \begin{align*}
 \cfrac{f(x)-f(y)}{d(x,y)}\leq \cfrac{\sup(f,g)(x)-\sup(f,g)(y)}{d(x,y)}&=\cfrac{g(x)-f(y)}{d(x,y)}\leq \cfrac{g(x)-g(y)}{d(x,y)}
 \end{align*}

So we obtain:
\begin{align*}
\lim\limits_{r\rightarrow +\infty}\sup\limits_{\substack{x \textrm{\ or\ } y \notin \overline{B}(0,r)\\ x\neq y}}\cfrac{\inf(f,g)(x)-\inf(f,g)(y)}{d(x,y)}&=0 \end{align*}
and
 \begin{align*}
\lim\limits_{r\rightarrow +\infty}\sup\limits_{\substack{x \textrm{\ or\ } y \notin \overline{B}(0,r)\\ x\neq y}}\cfrac{\sup(f,g)(x)-\sup(f,g)(y)}{d(x,y)}&=0
\end{align*}
\end{itemize}
\noindent finally $\inf(f,g), \sup (f,g) \in S$.

\medskip

\noindent  Assume that $M$ is a proper pointed metric space such that $S$ separates points uniformly.
Mimicking the proof of Lemma 3.2.3 in \cite{W} we obtain the following:
there exists $b\geq 1$ such that for all $f\in Lip_0(M)$, for all $ A$ finite subset of $M$ containing $0$, we can find $g\in S$ so that $\|g\|_L\leq b\|f\|_L$ and $g_{|A}=f_{|A}$. Finally, one can deduce that the weak$^*$-closure of $S$ is $Lip_0(M)$, so $S$ is separating.
\end{proof}

\noindent  Along the proof of Theorem \ref{properdual} we will need a characterization of compact metric spaces which are countable. First define the Cantor-Bendixon derivation. For a metric space $(M,d)$ we denote:
\begin{itemize}
\item $M'$ the set of accumulation points of $M$.
\item $M^{(\alpha)}=(M^{(\alpha -1)})'$, for a successor ordinal $\alpha$.
\item $M^{(\alpha)}=\bigcap\limits_{\beta<\alpha}M^{(\beta)}$, for a limit ordinal $\alpha$.
\end{itemize}
A compact metric space $(K,d)$ is countable if and only if there is a countable ordinal $\alpha$ such that $K^{(\alpha)}$ is finite.

\renewcommand{\proofname}{Proof of Theorem 2.1:}

\begin{proof}
Note first that the subspace $S$ of $\F{M}^*$ defined previously is closed in $\F{M}^*$, so it follows from Lemmas \ref{NA},  \ref{separation} and from Petun{\={\i}}n and Pl{\={\i}}{\v{c}}ko's result \cite{PP} that we only have to prove that $S$ separates points uniformly.

\medskip

\noindent Ideas are the same as in the proof of Theorem 2.1 in \cite{D} but for sake of completeness we will give all details. Let $M$ be a proper countable metric space, $x,y\in M$ and $a=d(x,y)$. The ball $\overline{B}\left(x, \frac{3a}{2}\right)$ is compact and countable so there exist  a countable ordinal $\alpha_0$, $k_1 \in \N$ and $y_{1}^1, \cdots, y_{k_1}^1\in M$ such that $$\overline{B}\left(x, \frac{3a}{2}\right)^{(\alpha_0)}=\{y_{1}^1, \cdots, y_{k_1}^1\}$$
 We can find $r_1, s_1, t_1$ and $$u_1^1<\cdots<u_{r_1}^1\leq \frac{a}{2}< v_1^1<\cdots<v_{s_1}^1< a\leq w_1^1<\cdots<w_{t_1}^1\leq \frac{3a}{2}$$ such that 
$	\{d(x,y^1_{i}), 1\leq i \leq k_1\}=\{u_1^1,\cdots,u_{r_1}^1,  v_1^1,\cdots,v_{s_1}^1, w_1^1,\cdots,w_{t_1}^1\}	$.
Now set 
$$
u_1=\min\left(
\begin{array}{l}
\left\{\frac{a}{2},\  u^1_1, \ \frac{a}{2}-u_{r_1}^{1}, \ w_1^1-a, \ \frac{3a}{2}-w_{t_1}^1\right\}\backslash \{0\} \\
 \bigcup\left\{u^i_1-u^{i-1}_1 , \  2\leq i \leq r_1\}\cup\{w^i_1-w^{i-1}_1 , \  2\leq i \leq t_1\right\}
 \end{array}  \right)
$$

\noindent and define $\varphi_1 : \left[0,+\infty\right)\rightarrow\left[0,+\infty\right)$ by 

$$
\varphi_1(t) = \left\{
    \begin{array}{ll}
        0 \!\!\!\!\!\!&, \  t \in \left[ 0,\frac{u_1}{4}\right):=U^0_1  \\
        u^i_1\!\!\!\!\!\!&,  \ t\in \left(u^i_1-\frac{u_1}{4},u^i_1+\frac{u_1}{4} \right):=U^i_1 , \  1\leq i \leq r_1\\
	\frac{a}{2}\!\!\!\!\!\!&, \  t\in \left(\frac{a}{2}-\frac{u_1}{4},a+\frac{u_1}{4} \right):=W_1^{0} \ (\textrm{possibly \ } U_1^{r_1}, W^{1}_1\subset W^{0}_1)
\\
	\frac{3a}{2}-w^i_1\!\!\!\!\!\!&,  \ t\in \left(w^i_1-\frac{u_1}{4},w^i_1+\frac{u_1}{4} \right):=W^i_1 , \  1\leq i \leq t_1\\
	0\!\!\!\!\!\!&, \ t\in \left(3\frac{a}{2}-\frac{u_1}{4}, +\infty \right):=W^{t_1+1}_1 \ (\textrm{possibly \ } W^{t_1}_1\subset W^{t_1+1}_1)
    \end{array}
\right.
$$

\noindent and $\varphi_1$ is affine on each interval of $[0,+\infty) \  \backslash \displaystyle \left(\left(\cup_{i=0}^{r_1}U_1^i\right)\cup \left(\cup_{i=0}^{t_1+1}W_1^i\right)\right)$. One can check that $\|\varphi_1\|_L\leq 2$.

 With $f(\cdot)=d(\cdot, x)$, we set $$C_1=f^{-1}\left([0,+\infty) \  \backslash \displaystyle \left(\left(\cup_{i=0}^{r_1}U_1^i\right)\cup \left(\cup_{i=0}^{t_1+1}W_1^i\right)\right)\right).$$

\medskip

\noindent  First, if $C_1$ is finite or empty, define $h(\cdot)=2\left(\varphi_1\circ d(\cdot,x)-\varphi_1\circ d(0,x)\right)$. Then we have $|h(x)-h(y)|=d(x,y)$, $h(0)=0$ and $\|h\|_L\leq 4$. We need to prove that $h\in lip_0(M)$. Set 
$$\delta = \left\{  \begin{array}{ll}
u_1/2, &\textrm{if}\ C_1=\emptyset\\
\cfrac{1}{2}\inf \left( \left\{ u_1, \textrm{sep}(C_1)  \right\}\cup \{\textrm{dist}(z, M\backslash C_1), \ z\in  D_1\} \right), &\textrm{otherwise}
\end{array}\right.$$
where $$\textrm{sep}(C_1)=\inf\{d(z,t), \ z\neq t, \ z,t\in C_1\}$$ and $$D_1=f^{-1}\left(	[0,+\infty) \  \backslash \displaystyle \left(\left(\cup_{i=0}^{r_1}\overline{U_1^i}\right)\cup \left(\cup_{i=0}^{t_1+1}\overline{W_1^i}\right)\right)	\right).$$
Since $C_1$ is finite we have $\textrm{sep}(C_1)>0$. Moreover,  $\textrm{dist}(z, M\backslash C_1)>0$ for $z\in D_1$ and $D_1$ is finite.
Thus we deduce that $\delta>0$.

If follows that every $z\neq t\in M$ such that $d(z,t)\leq \delta$ are not in $D_1$ and there exists $0\leq i\leq r_1$ such that $z,t \in f^{-1}\left(\overline{U_1^i}\right)$ or $0\leq i\leq t_1+1$ such that $z,t \in f^{-1}\left(\overline{W_1^i}\right)$, so the equality $h(z)=h(t)$ holds, i.e. $h\in lip_0(M)$.

Finally, let us prove that $\lim\limits_{r\rightarrow +\infty}\sup\limits_{\substack{x \textrm{\ or\ } y \notin \overline{B}(0,r)\\ x\neq y}}\cfrac{h(x)-h(y)}{d(x,y)}=0$, that is  $h\in S$.

\noindent Let $r>0$ be such that $\overline{B}(x,\frac{3a}{2})\subset \overline{B}(0,r)$ and $z\notin\overline{B}(0,r)$. 

\noindent First if $t\notin \overline{B}\left(x,\frac{3a}{2}\right)$ then $h(z)=h(t)$ and $\cfrac{|h(z)-h(t)|}{d(z,t)}=0$. 

\noindent Secondly if $t\in \overline{B}(x,\frac{3a}{2})$, then 
$$\cfrac{|h(z)-h(t)|}{d(z,t)}=\cfrac{|h(t)|}{d(z,t)}\leq \cfrac{d(x,y)}{d(z,t)}\ \substack{\longrightarrow\\{r\rightarrow +\infty}}\ 0$$
so $h\in S$.

\medskip

\noindent  Assume now that $C_1$ is infinite. It is a subset of $\overline{B}\left(x,\frac{3a}{2}\right)$ thus for every ordinal $\alpha$ we have $C_1^{(\alpha)}\subset \overline{B}\left(x,\frac{3a}{2}\right)^{(\alpha)}$. Moreover, $C_1\cap\overline{B}(x,\frac{3a}{2})^{(\alpha_0)}=\emptyset$ so we have $C_1^{(\alpha_0)}=\emptyset$. Since $C_1$ is compact and countable we can find $\alpha_1<\alpha_0$ such that $C_1^{(\alpha_1)}$ is finite and non empty. 

There exist $k_2\in \N$ and $y_2^1,\cdots, y_2^{k_2}\in C_1$ such that $C_1^{(\alpha_1)}=\{y_2^{1},\cdots, y_2^{k_2}\}$. Then we can find $r_2,t_2\in \N$ and $$u_2^1<\cdots<u_2^{r_2}<\frac{a}{2}-\frac{u_1}{4}, \ \frac{3a}{2}+\frac{u_1}{4}<w_2^1<\cdots<w_2^{t_2}$$ such that 
$$\{d(x,y_2^i)\ ;\ 1\leq i \leq k_2\}=\{u_2^1,\cdots,u_2^{r_2}, w_2^1,\cdots,w_2^{t_2}\}.$$
Set 
$$
u_2=\min\left(
\begin{array}{l}
\left\{u_1, (\frac{a}{2}-\frac{u_1}{4})-u_2^{r_2}, w_2^{1}-(\frac{3a}{2}+\frac{u_1}{4})\right\}\\
 \bigcup\{u_2^i-u_2^{i-1}, 2\leq i\leq r_2\}\cup\{w_2^i-w_2^{i-1}, 2\leq i\leq t_2\} \end{array}  \right)
 $$
and define $\varphi_{2} : \left[0,+\infty\right)\rightarrow\left[0,+\infty\right)$ by 

$$
\varphi_{2}(t) = \left\{
    \begin{array}{ll}
        \varphi_1(t) & , \  t \in \displaystyle \left(\cup_{i=0}^{r_1}U_1^i\right)\cup \left(\cup_{i=0}^{t_1+1}W_1^i\right)  \\
        \varphi_1(u^i_{2}) &, \  t\in \left(u^i_{2}-\frac{u_{2}}{2^{3}},u^i_{2}+\frac{u_{2}}{2^{3}} \right):=U^i_{2} , \ 1\leq i \leq r_{2}\\
         \varphi_1(w^i_{2}) &, \  t\in \left(w^i_{2}-\frac{u_{2}}{2^{3}},w^i_{2}+\frac{u_{2}}{2^{3}} \right):=W^i_{2} , \ 1\leq i \leq t_{2}\\
    \end{array}
\right.
$$
and $\varphi_{2}$ is continuous on $[0,+\infty)$ and affine on each interval of 
$$[0,+\infty)\backslash \left(\left(\cup_{i=0}^{r_1}U_1^i\right)\cup \left(\cup_{i=0}^{t_1+1}W_1^i\right)\cup\left(\cup_{i=1}^{r_2}U_1^i\right)\cup \left(\cup_{i=1}^{t_2}W_1^i\right)\right).$$
It is easy to check that $\|\varphi_2\|_L\leq \frac{8}{3}$. 

\medskip

\noindent  Now we set $C_2=C_1\backslash f^{-1}\left( \left(\cup_{i=1}^{r_2}U_1^i\right)\cup \left(\cup_{i=1}^{t_2}W_1^i\right)\right)$. First if $C_2$ is finite or empty the function $h(\cdot)=2\left(\varphi_2\circ d(\cdot,x)-\varphi_2\circ d(0,x)\right)$ verifies $h(0)=0$,  $|h(x)-h(y)|=d(x,y)$ and $\|h\|_L\leq \frac{16}{3}$. Moreover, if we set
$$\delta = \left\{  \begin{array}{ll}
u_2/2, &\textrm{if}\ C_2=\emptyset\\
\cfrac{1}{2}\min \left( \{ u_2  , \textrm{sep}(C_2)\} \cup \{ \textrm{dist}(z,M\backslash C_{2}), \  z\in D_2 \} \right), &\textrm{otherwise}
\end{array}\right.$$
\noindent where $D_2=C_1\backslash f^{-1}\left( \left(\cup_{i=1}^{r_2}\overline{U_1^i}\right)\cup \left(\cup_{i=1}^{t_2}\overline{W_1^i}\right)\right)$, we obtain that $\delta>0$ and when $z,t\in M$ are such that $d(z,t)\leq \delta$, then $h(z)=h(t)$. So finally $h$ is in $lip_0(M)$. The proof of the fact that $h$ belongs to $S$ is the same as previously. 

\medskip

\noindent  If $C_2$ is infinite we proceed inductively until we get $C_n$ finite, which eventually happens because we construct a decreasing sequence of ordinals.

\bigskip

\noindent The function $h$ we finally obtain verifies $h(0)=0$, $|h(y)-h(x)|=d(x,y)$ and 
$$ \norm{h}_L\leq \displaystyle 2\prod_{j=1}^{n}\left(1+\cfrac{1}{2^j-1} \right)\leq \displaystyle 2\prod_{j=1}^{+\infty}\left(1+\cfrac{1}{2^j-1} \right):=c
$$
\noindent where $c$ does not depend on $x$ and $y$.
Moreover, setting 
$$\delta = \left\{  \begin{array}{ll}
u_n/2, &\textrm{if}\ C_n=\emptyset\\
\cfrac{1}{2}\left(\min \{ u_n  , \textrm{sep}(C_n) \} \cup \{ \textrm{dist}(z,M\backslash C_{n}), \  z\in D_n \} \right), &\textrm{otherwise}
\end{array}\right.$$

\noindent we get $\delta>0$ and if $z,t\in M$ are such that $d(z,t)\leq \delta$, then $h(z)=h(t)$, i.e. $h\in lip_0(M)$. Finally, $h$ still verifies $\lim\limits_{r\rightarrow +\infty}\sup\limits_{\substack{x \textrm{\ or\ } y \notin \overline{B}(0,r)\\ x\neq y}}\cfrac{h(x)-h(y)}{d(x,y)}=0$ so we can conclude that $S$ separates points uniformly and therefore $\F{M}$ is isometrically isomorphic to $S^*$.
\end{proof}
\renewcommand{\proofname}{Proof:}

\noindent  We can now prove the second result of this section:

\begin{thm}
The Lipschitz-free space over a countable proper metric space has the metric approximation property.
\end{thm}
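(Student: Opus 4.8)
The plan is to reduce the metric approximation property for $\F{M}$ to the construction of well-chosen finite-rank contractions, fed by the machinery already developed for Theorem~\ref{properdual}. Since $M$ is countable, $\F{M}$ is separable, so it has the metric approximation property as soon as there is a sequence $(T_n)$ of finite-rank operators on $\F{M}$ with $\|T_n\|\le 1$ converging to the identity in the strong operator topology; and because $\mathrm{span}\{\delta_x:x\in M\}$ is dense and the $T_n$ are uniformly bounded, it is enough that $T_n\delta_x\to\delta_x$ for each $x\in M$. Fixing an exhaustion $A_1\subseteq A_2\subseteq\cdots$ of $M$ by finite sets containing $0$, it therefore suffices to produce, for every finite $A\subseteq M$ with $0\in A$ and every $\varepsilon>0$, a linear finite-rank operator $T_{A,\varepsilon}$ on $\F{M}$ with $\|T_{A,\varepsilon}\|\le 1+\varepsilon$ and $T_{A,\varepsilon}\delta_x=\delta_x$ for all $x\in A$: taking $A=A_n$, $\varepsilon=1/n$, and rescaling by $1/(1+1/n)$ then gives the desired sequence, since for fixed $x$ one eventually has $T_{A_n,1/n}\delta_x=\delta_x$.

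By Theorem~\ref{properdual} we have $\F{M}=S^{*}$, and I would obtain $T_{A,\varepsilon}$ as the adjoint $R_{A,\varepsilon}^{*}$ of a linear finite-rank operator $R_{A,\varepsilon}\colon S\to S$ with $\|R_{A,\varepsilon}\|\le 1+\varepsilon$ and $(R_{A,\varepsilon}f)|_{A}=f|_{A}$ for every $f\in S$. Such an adjoint is finite-rank with $\|R_{A,\varepsilon}^{*}\|=\|R_{A,\varepsilon}\|\le 1+\varepsilon$, and for $x\in A$ one has $R_{A,\varepsilon}^{*}\delta_x(f)=(R_{A,\varepsilon}f)(x)=f(x)=\delta_x(f)$, i.e. $R_{A,\varepsilon}^{*}\delta_x=\delta_x$. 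Note that \emph{exact} reproduction of the values on $A$ is precisely what makes the adjoints converge strongly (and not merely weak$^{*}$) on $\F{M}$, so that no Radon--Nikod\'ym-type transfer lemma is required.

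To build $R_{A,\varepsilon}$ I would follow, and sharpen, the argument for Theorem~\ref{properdual}. Using properness, choose $R$ with $A\subseteq\overline{B}(0,R)$; since $\overline{B}(0,R)$ is compact and countable, its Cantor--Bendixson derivation terminates at a countable ordinal, and one can run the same inductive construction of piecewise-affine rescalings of distance functions — with plateaus around the critical values, so the output lies in $lip_0(M)$, and made constant outside a slightly larger ball, so the output lies in $S$ — but now organising these pieces into a single linear interpolation $f\mapsto R_{A,\varepsilon}f$ of the finitely many values $\{f(a):a\in A\}$. The defining property of $S$ (Lipschitz quotients vanishing at infinity) guarantees that the part of $M$ outside the ball contributes nothing, so that $R_{A,\varepsilon}$ is genuinely finite-rank and lands in $S$.

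The main obstacle lies exactly in the two points where this goes beyond Theorem~\ref{properdual}, whose proof produces, for a single distance function and with no linearity, an extension whose Lipschitz norm is only bounded by the fixed constant $c=2\prod_{j\ge1}\bigl(1+\tfrac1{2^j-1}\bigr)>1$. One must (i) make $f\mapsto R_{A,\varepsilon}f$ linear in $f|_{A}$, and (ii) bring the Lipschitz-norm bound down from a fixed constant to $1+\varepsilon$, since a fixed constant $>1$ would yield only bounded, not metric, approximation. Achieving (ii) forces a monotone, level-by-level version of the construction — in effect a monotone finite-dimensional decomposition of $S$ indexed by the Cantor--Bendixson ranks of the exhausting balls (or at least a monotone sequence of finite-rank near-contractions), in which the radial exhaustion enters in a $c_{0}$-like, norm-non-increasing way — and verifying that each partial interpolation operator is a near-contraction on $S$ is where the technical work is concentrated.
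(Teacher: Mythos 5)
Your argument is a reduction, not a proof: the object that carries all the weight --- a linear finite-rank operator $R_{A,\varepsilon}$ on $S$ with $\|R_{A,\varepsilon}\|\le 1+\varepsilon$ and $(R_{A,\varepsilon}f)|_{A}=f|_{A}$ for every finite $A\ni 0$ --- is never constructed, and you yourself flag its construction as ``the main obstacle'' where ``the technical work is concentrated''. Nothing in the proof of Theorem \ref{properdual} supplies it: that construction is performed for one pair of points at a time, is not linear in $f$, and its Lipschitz bound is the fixed constant $c=2\prod_{j\ge 1}\bigl(1+\frac{1}{2^{j}-1}\bigr)>1$; no mechanism is indicated for turning it into a linear interpolation scheme, and none at all for driving the constant down to $1+\varepsilon$. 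Since $R_{A,\varepsilon}f$ is to depend only on $f|_{A}$, what you are really postulating is a linear, almost norm-preserving, value-preserving extension-type operator from the restrictions to an arbitrary finite subset $A$ back into $S$, uniformly over all finite subsets of an arbitrary countable proper space. That is a strong statement, not known to follow from anything in the paper, and producing MAP-quality (i.e.\ $1+\varepsilon$) constants by such a direct construction is precisely the difficulty the paper is built to avoid; your closing appeal to a ``monotone, level-by-level version'' of the Cantor--Bendixson construction is a hope, not an argument.

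The paper's route shows why the constant $1+\varepsilon$ is never needed, which is the idea your proposal misses. By Grothendieck's theorem \cite{Gr}, a separable Banach space which is isometrically a dual space and has the AP automatically has the MAP; so once Theorem \ref{properdual} identifies $\F{M}$ as $S^{*}$, it suffices to prove the BAP with \emph{some} constant. The paper gets the $72$-BAP cheaply: Kalton's decomposition (Lemma 4.2 in \cite{K1}) provides operators $S_N\colon\F{M}\to\F{A_N}$ of norm at most $72$ with $S_N\gamma\to\gamma$, where $A_N=\bigl(\overline{B}(0,2^{N+1})\setminus B(0,2^{-N-1})\bigr)\cup\{0\}$ is compact and countable, so $\F{A_N}$ has the MAP by \cite{D}; composing $S_N$ with finite-rank contractions on $\F{A_N}$ gives finite-rank operators of norm at most $72$ converging to the identity on $\F{M}$. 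If you want to salvage your scheme, aim only for uniformly bounded (not almost-isometric) finite-rank operators --- for instance via such a decomposition into annuli and the compact countable case --- and then let Grothendieck's theorem upgrade AP to MAP; as written, the near-isometric linear operators $R_{A,\varepsilon}$ are exactly the missing content, so the proof is incomplete.
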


\begin{proof}
A theorem of A. Grothendieck \cite{Gr} asserts that if a separable Banach space is isometrically isomorphic to a dual space and has the AP, then it has the MAP. Thus it follows from Theorem \ref{properdual} that it is enough to prove that for $M$ a countable proper metric space, $\F{M}$ has the BAP.

\medskip

\noindent We need the following result we can deduce from Lemma 4.2 in \cite{K1}:

\noindent For any pointed metric space $M$, define for $N\in \N$, $$A_N=\left(\overline{B}(0,2^{N+1})\backslash B(0,2^{-N-1})\right)\cup\{0\}.$$ Then there exists a sequence of operators $S_N:\F{M}\rightarrow \F{A_N}$, of norm less than $72$, such that for every $\gamma \in \F{M}$  the sequence $(S_N(\gamma))_{N\in \N}$ converges to $\gamma$.

\medskip

\noindent Now let $M$ be a countable and proper metric space. Since every closed ball is compact, the set $A_N$ is countable and compact, for every $N\in \mathbb{N}$. Thus Theorem 3.1 in \cite{D}  asserts that $\F{A_N}$ has the MAP and since for every $N\in \N$, $\F{A_N}$ is separable,
  there exists $R_p^N: \F{A_N}\rightarrow\F{A_N}$ a sequence of operators of finite-rank, so that for every $ \gamma \in \F{A_N}$, $\displaystyle \lim_{p\rightarrow+\infty}R_p^N\gamma =\gamma$ and $\norm{R_p^N}\leq 1$ for every $p\in \N$ (\cite{Pe}, see also Theorem 1.e.13 in \cite{LT}).

Setting $Q_{N,p}=R_p^N\circ S_N$ we deduce that the range of $Q_{N,p}$ is finite dimensional, $\norm{Q_{N,p}}\leq \norm{R_p^N}\norm{S_N}\leq 72$ and for every $\gamma \in \F{M}$, 
$$\lim_{N\rightarrow +\infty}\lim_{p\rightarrow +\infty}R_p^NS_N\gamma=\lim_{N\rightarrow +\infty}S_N\gamma=\gamma.$$ 
Thus $\F{M}$ has the $72$-BAP.

\medskip

\noindent  Finally, we can conclude that $\F{M}$ has the MAP.
\end{proof}

\section{Ultrametric spaces}

A metric space $(M,d)$ is said to be ultrametric if for every $x,y,z\in M$, we have $d(x,z)\leq \max \{d(x,y), d(y,z)\}$. One can easily prove the following useful properties:

\begin{Propr}
For $x,y\in M$ and $r,r'>0$, if $B(x,r)\cap B(y,r')\neq \emptyset$ and $r\leq r'$ then $B(x,r)\subset B(y,r')$.
\end{Propr}
\begin{Propr}
For $x,y\in M$ and $r>0$, if $y\in B(x,r)$, then $B(y,r)=B(x,r)$.
\end{Propr}
\begin{Propr}
For $x,y,z\in M$, if $d(x,y)\neq d(y,z)$ then $$d(x,z)=\max\{d(x,y),d(y,z)\}.$$
\end{Propr}
\begin{Propr}
For every $r>0$ there exists a partition of $M$ in closed balls of radius $r$.
\end{Propr}

\noindent Now let us prove the first result of this section:

\begin{thm}
The Lipschitz-free space over a proper ultrametric space has the metric approximation property.
\end{thm}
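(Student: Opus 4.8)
The plan is to prove the MAP directly, by producing for each $r>0$ an explicit norm-one finite-rank operator $Q_r$ on $\F{M}$ such that $Q_r\to\mathrm{id}_{\F{M}}$ pointwise as $r\to0$. Since the MAP is exactly the $1$-BAP this is enough, and, in contrast with the countable proper case, it avoids any appeal to Grothendieck's theorem or to a dual representation of $\F{M}$. (One could instead mimic the proof of Theorem 2.2, applying the construction below to each compact ultrametric space $A_N$; but the direct route is more self-contained.)

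First, using the last Property above, I would fix for each $r\in(0,1]$ a partition of $M$ into closed balls of radius $r$; writing $x\sim_r y$ when $x$ and $y$ lie in the same piece, the ultrametric inequality shows that $\sim_r$ is exactly the relation $d(x,y)\le r$, hence an equivalence relation, with pieces $\overline{B}(x,r)$. Choose a representative $c_r(x)$ in the piece of each $x$, with $c_r(0)=0$. Put $R=1/r$: by properness $\overline{B}(0,R)$ is compact, distinct pieces are $r$-separated, so only finitely many pieces meet $\overline{B}(0,R)$; moreover, since $r\le R$, each such piece is in fact contained in $\overline{B}(0,R)$, and $\overline{B}(0,R)$ is their union. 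Then define $g_r\colon M\to\F{M}$ by $g_r(x)=\delta_{c_r(x)}$ if $x\in\overline{B}(0,R)$ and $g_r(x)=0$ otherwise.

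The key step is to check that $g_r$ is $1$-Lipschitz, via three cases. If $x,y\in\overline{B}(0,R)$, then either $d(x,y)\le r$, so $c_r(x)=c_r(y)$ and $g_r(x)=g_r(y)$; or $d(x,y)>r\ge\max\{d(x,c_r(x)),d(y,c_r(y))\}$, and the ultrametric inequality gives $\norm{g_r(x)-g_r(y)}=d(c_r(x),c_r(y))\le d(x,y)$. If $x\in\overline{B}(0,R)$ and $y\notin\overline{B}(0,R)$, then $d(0,y)>R\ge d(0,x)$, so Property 3 forces $d(x,y)=d(0,y)$, while $\norm{g_r(x)-g_r(y)}=\norm{\delta_{c_r(x)}}=d(c_r(x),0)\le R<d(x,y)$. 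If $x,y\notin\overline{B}(0,R)$, then $g_r(x)=g_r(y)=0$. Since $g_r(0)=\delta_0=0$, the universal property of Lipschitz-free spaces (see \cite{W}) extends $g_r$ to a linear operator $Q_r\colon\F{M}\to\F{M}$ with $\norm{Q_r}\le1$, whose range is the linear span of the finitely many $\delta_{c_r(x)}$, so $Q_r$ has finite rank. Finally, for fixed $x\in M$ and $r$ small enough that $x\in\overline{B}(0,1/r)$ we have $\norm{Q_r\delta_x-\delta_x}=d(c_r(x),x)\le r\to0$; since $\sup_r\norm{Q_r}\le1$ and $\overline{\mathrm{span}}\{\delta_x:x\in M\}=\F{M}$, it follows that $Q_r\to\mathrm{id}$ pointwise, hence uniformly on compact subsets of $\F{M}$, which is the MAP. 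The construction is otherwise routine; the step requiring genuine care is the near/far case of the Lipschitz estimate, where one must see that collapsing the far part of $M$ to the basepoint does not increase the Lipschitz constant — and this is precisely where the ultrametric inequality, together with the choice $r\le R$, is used.
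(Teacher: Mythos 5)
Your proof is correct, and its core construction is exactly the operator underlying the paper's proof: partition $\overline{B}(0,n)$ (your $\overline{B}(0,1/r)$) into closed balls of radius $1/n$ (your $r$), collapse each piece to a chosen representative, and send everything outside the large ball to the basepoint; the three-case ultrametric estimate you give is the same one the paper carries out. Where you genuinely diverge is in the functional-analytic packaging. The paper defines the operator on the dual side, as $L_n$ on $Lip_0(M)$, checks $\|L_n\|\leq 1$ and $\tau_p$--$\tau_p$ continuity to recognize $L_n$ as the adjoint of a finite-rank $R_n$ on $\F{M}$; but pointwise convergence of $L_nf$ to $f$ only yields \emph{weak} convergence $R_n\gamma\to\gamma$, so the paper must finish with convex combinations and a diagonal argument (citing Borel-Mathurin) to extract the MAP. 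You instead build the operator directly on $\F{M}$ via the universal (linearization) property of the free space and observe that $\|Q_r\delta_x-\delta_x\|=d(c_r(x),x)\leq r$ once $x\in\overline{B}(0,1/r)$, which, together with $\sup_r\|Q_r\|\leq 1$ and density of $\mathrm{span}\{\delta_x\}$, gives convergence in the strong operator topology and hence the $1$-BAP with no appeal to weak convergence, to adjoints, or to the convex-combination step. This is a genuine streamlining: you get an explicit norm-one finite-rank approximating sequence converging in norm on each element, whereas the paper's route passes through a nonconstructive upgrading of weak to strong approximation. Your details check out (the enforcement $c_r(0)=0$ so that $g_r$ is base-point preserving, the containment of the relevant pieces in $\overline{B}(0,1/r)$ using $r\leq 1\leq 1/r$, and the use of Property 3 in the near/far case are all correctly handled), so I see no gap.
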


\begin{proof}
Let $M$ be a proper ultrametric space and $\tau_p$ the topology of pointwise convergence on $Lip_0(M)$. We will construct a sequence $(L_n)_{n\in \mathbb{N}}$ of operators on $Lip_0(M)$, of norm less than $1$, such that for every $f\in Lip_0(M)$ the sequence $(L_nf)_{n\in \N}$ converges pointwise to $f$. 

\medskip

\noindent  Let $n\in \N$. Because $M$ is ultrametric there exists a partition of $\overline{B}(0,n)$ into balls $\overline{B}(x,\frac{1}{n})$. Moreover, the closed ball $\overline{B}\left(0,n\right)$ is compact then it is possible to find $x_1,\cdots,x_k\in M$ such that $\{\overline{B}\left(x_i,\frac{1}{n}\right)\}_{i=1}^k$ is a finite partition of $\overline{B}(0,n)$. Now define $L_n: Lip_0(M)\rightarrow Lip_0(M)$ as follows:
$$	\begin{array}{rcl}\forall f \in Lip_0(M), \  L_n(f):M&\!\!\!\rightarrow \!\!\!&\mathbb{R}\\x&\!\!\!\mapsto\!\!\!&
\left\{\begin{array}{ll}
f(x_i)\!\!\!&, \textrm{\ where \ }x\in \overline{B}\left(x_i,\frac{1}{n}\right), 1\leq i\leq k	\\
0\!\!\!&,\  x\notin \overline{B}\left(0,n\right)
\end{array}\right.
\end{array}$$

\medskip

\noindent We will first compute the norm of $L_n$. Let $f\in Lip_0(M)$ and $x,y\in M$.
\begin{itemize}
\item If there exists $i\in\{1,\cdots, k\}$ such that $x,y\in \overline{B}\left(x_i,\frac{1}{n}\right)$ then clearly:
$$|L_n(f)(x)-L_n(f)(y)|=0\leq \|f\|_L d(x,y).$$
\item Now assume $x\in \overline{B}\left(x_i,\frac{1}{n}\right)$ and $y\in \overline{B}\left(x_j,\frac{1}{n}\right)$ with $i\neq j$. 

\noindent Remark that because $x\in \overline{B}\left(x_i,\frac{1}{n}\right)$, we have $\overline{B}\left(x_i,\frac{1}{n}\right)=\overline{B}\left(x,\frac{1}{n}\right)$. Furthermore $y\notin \overline{B}\left(x_i,\frac{1}{n}\right)=\overline{B}\left(x,\frac{1}{n}\right)$, so $d(x,y)>\frac{1}{n}$.
\begin{align*}
|L_n(f)(x)-L_n(f)(y)|&=|f(x_i)-f(x_j)|\leq \|f\|_Ld(x_i,x_j)\\
&\leq \|f\|_L\max\{d(x_i,x),d(x_j,x)\}= \|f\|_Ld(x_j,x)\\
&\leq \|f\|_L\max\{d(x_j,y),d(y,x)\}=\|f\|_Ld(x,y)
\end{align*}
\item Finally, for $x\in \overline{B}(0,n)$ and $y\notin \overline{B}(0,n)$, there exists $i\in \{1,\cdots, k\}$ such that $x\in \overline{B}\left(x_i,\frac{1}{n}\right)$. Because $x\in \overline{B}(0,n)$, we have $ \overline{B}(0,n)=\overline{B}(x,n)$ and since $y\notin \overline{B}(0,n)$, we obtain $d(x,y)>n$. Hence
\begin{align*}
|L_n(f)(x)-L_n(f)(y)|=|f(x_i)|&\leq \|f\|_Ld(x_i,0)\leq \|f\|_L\times n \leq \|f\|_Ld(x,y).
\end{align*}

\end{itemize}
Then $\|L_n(f)\|\leq \|f\|_L$ and $\|L_n\|\leq 1$.

\medskip 

\noindent  One can easily prove that $L_n$ is $\tau_p-\tau_p$-continuous and that for $f\in Lip_0(M)$, the sequence $(L_n(f))_{n\in \N}$ pointwise converges to $f$. Then it is the adjoint of an operator $R_n:\F{M}\rightarrow\F{M}$ of norm less than $1$ such that for every $\gamma\in \F{M}$, the sequence $(R_n(\gamma))_{n\in \N}$ weakly-converges to $\gamma$.
Finally, for every $n\in \N$ we have $R_n\left(\F{M}\right)=\textrm{span}\{\delta_{x_i}, 1\leq i \leq k\}$, so the operator $R_n$ is of finite rank. Then, because $\F{M}$ is separable, using convex combinations and a diagonal argument, we can conclude that $\F{M}$ has the MAP \cite{BM}.
\end{proof}

\noindent It is also possible to prove that the Lipschitz-free space over a proper ultrametric space $M$ is a dual space. We will prove first that in the case of $K$ a compact ultrametric space, $\F{K}$ is isometrically isomorphic to $lip_0(K)^*$. 
We will again use the result of Petun{\={\i}}n and Pl{\={\i}}{\v{c}}ko. Note that Theorem 3.3.3 in \cite{W} provides an alternative approach. 

\medskip

\noindent  Before stating the result let us introduce the notion of $\R$-trees and some background about its link with ultrametric spaces:

\begin{defn}
A metric space $(T,d)$ is said to be an $\R$-tree when the two following conditions hold:
\begin{enumerate}
\item for every $a,b$ in $T$, there exists a unique isometry $\phi:[0,d(a,b)]\rightarrow T$ such that $\phi (0)=a$ and $\phi (d(a,b))=b$.
\item any continuous and one-to-one mapping $\varphi : [0,1]\rightarrow T$ has same range as the isometry $\phi$ associated to the points $a=\varphi(0)$ and $b=\varphi (1)$. 
\end{enumerate}
\end{defn}

\noindent{\it Background.} P. Buneman proved in \cite{B} that the 4-points property is a characterization of subsets of $\R$-trees, where a metric space $(M,d)$ has the 4-points property if for every $x, y, z$ and $t$ in $M$ we have:
$$d(x,y)+d(z,t)\leq \max \left\{d(x,z)+d(y,t)\ ,\ d(x,t)+d(y,z)\right\}.$$

\noindent In particular  any ultrametric space $(M,d)$ has the 4-points property.

It is proved in \cite{M} by Matou\v{s}ek that, for a subspace $M$ of a tree $T$, it is possible to find a linear extension operator from $Lip_0(M)$ to $Lip_0(T)$ which is bounded. In particular $\F{M}$ is complemented in $\F{T}$.

Moreover, Godard proved in \cite{Go} that the Lipschitz-free space over an $\R$-tree is an $L_1$-space. 

In conclusion if $M$ is a ultrametric space, its Lipschitz-free space is complemented into an $L_1$-space.

\begin{thm}\label{compactultra}
If $(K,d)$ is a compact ultrametric space, then $\F{K}$ is isometrically isomorphic to $lip_0(K)^*$ and $lip_0(K)$ is isomorphic to $c_0(\N)$.
\end{thm}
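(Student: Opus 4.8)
The plan is to prove the two assertions separately, using the theorem of Petun{\={\i}}n and Pl{\={\i}}{\v{c}}ko for the duality and an explicit $c_0$-decomposition for the isomorphism with $c_0(\N)$.

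\emph{The duality.} I would apply Petun{\={\i}}n and Pl{\={\i}}{\v{c}}ko's result with $X=\F{K}$ (separable, since $K$ is compact metric) and $F=lip_0(K)$, a closed subspace of $Lip_0(K)=\F{K}^{*}$. As $K$ is bounded, the condition defining the space $S$ of Theorem~\ref{properdual} is vacuous, so $S=lip_0(K)$ and Lemma~\ref{NA} already gives $lip_0(K)\subseteq NA(\F{K})$. By Lemma~\ref{separation} it only remains to check that $lip_0(K)$ separates points uniformly, and the ultrametric structure makes this immediate: given $x\neq y$ with $a=d(x,y)$, the ball $B=\overline{B}(x,a/2)$ is clopen and contains $x$ but not $y$; take $f=a\,\mathbf{1}_{K\setminus B}$ if $0\in B$ and $f=a\,\mathbf{1}_{B}$ otherwise. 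Then $f(0)=0$ and $\abs{f(x)-f(y)}=a=d(x,y)$; moreover $f$ is locally constant (if $d(z,w)\leq a/2$ the ultrametric inequality forces $z$ and $w$ to the same side of $B$), so $f\in lip_0(K)$, and since its only jump has height $a$ over distances $>a/2$ one has $\|f\|_L\leq 2$. So $lip_0(K)$ separates points uniformly with constant $2$, hence is separating, and Petun{\={\i}}n and Pl{\={\i}}{\v{c}}ko's theorem gives that $\F{K}$ is isometrically isomorphic to $lip_0(K)^{*}$.

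\emph{The decomposition.} I may assume $K$ infinite (for finite $K$ the space $lip_0(K)$ is finite dimensional) and, after rescaling, $\operatorname{diam}K\leq1$. Fix a dense sequence $(z_i)_{i\geq1}$ in $K$ with $z_1=0$, and for $n\geq0$ let $\mathcal{P}_n$ be the finite partition of $K$ into closed balls of radius $2^{-n}$; these partitions are nested and refine as $n$ increases. To each ball $B$ occurring in some $\mathcal{P}_n$ assign the centre $x_B=z_i$ with $i$ minimal such that $z_i\in B$; this choice is coherent along the tree of balls (the centre of a ball is the centre of the unique child containing it) and $x_B=0$ for the ball containing $0$ at every level. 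Define $L_n\colon lip_0(K)\to lip_0(K)$ by letting $L_nf$ be constant equal to $f(x_B)$ on each $B\in\mathcal{P}_n$. The same use of the ultrametric inequality as in the proof of the previous theorem gives $\|L_n\|\leq1$, and splitting pairs $z,w$ according to whether or not $d(z,w)\leq2^{-n}$ yields $\|f-L_nf\|_L\leq2\,\omega_f(2^{-n})\to0$ for every $f\in lip_0(K)$, where $\omega_f(t)=\sup_{0<d(u,v)\leq t}\abs{f(u)-f(v)}/d(u,v)$. Put $E_n=L_n-L_{n-1}$ (with $L_0=0$): from $L_nL_m=L_{\min(n,m)}$ it follows that each $E_n$ is a finite-rank projection, $E_nE_m=0$ for $n\neq m$, $\|E_n\|\leq2$, and $f=\sum_{n\geq1}E_nf$ for every $f$. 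The decisive step is the two-sided estimate
$$\tfrac12\sup_n\|x_n\|_L\;\leq\;\Big\|\sum_n x_n\Big\|_L\;\leq\;5\sup_n\|x_n\|_L$$
for every finitely supported $(x_n)$ with $x_n\in\operatorname{range}(E_n)$. For the upper bound, with $f=\sum_n x_n$ and $z\neq w$ separated at level $m$ (so $2^{-(m+1)}<d(z,w)\leq2^{-m}$), I would write $f(z)-f(w)=\sum_{k\geq m+1}\bigl(E_kf(z)-E_kf(w)\bigr)$ since the lower terms agree on the common level-$m$ ball, bound the $k=m+1$ term by the trivial estimate $\abs{E_{m+1}f(z)-E_{m+1}f(w)}\leq\|x_{m+1}\|_L\,d(z,w)$, and bound the tail using the pointwise inequality $\abs{E_kf(u)}\leq2^{-(k-1)}\|E_kf\|_L$ — obtained by comparing $u$ with a point of the central child of its level-$(k-1)$ ball, on which $E_kf$ vanishes — together with $\sum_{k\geq m+2}2^{-(k-1)}=2^{-m}<2\,d(z,w)$. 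The lower bound is immediate from $\|x_n\|_L=\|E_nf\|_L\leq\|E_n\|\,\|f\|_L$. Since $lip_0(K)=\overline{\operatorname{span}}\bigcup_n\operatorname{range}(E_n)$, the standard argument for $c_0$-decompositions then shows that $lip_0(K)$ is isomorphic to $\bigl(\sum_n\oplus\operatorname{range}(E_n)\bigr)_{c_0}$.

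\emph{Identifying the summands, and the main obstacle.} A function $g\in\operatorname{range}(E_n)$ is constant on the balls of $\mathcal{P}_n$ and vanishes on the central child of each ball of $\mathcal{P}_{n-1}$; since any two distinct balls of $\mathcal{P}_n$ are at distance $>2^{-n}$, while such a $g$ attains its sup-norm on a non-central child at distance $\leq2^{-(n-1)}$ from a ball on which it vanishes, one gets $2^{n-1}\|g\|_\infty\leq\|g\|_L\leq2^{n+1}\|g\|_\infty$. Hence $g\mapsto2^{n-1}$ times the vector of the values of $g$ on the non-central children of the balls of $\mathcal{P}_{n-1}$ is an isomorphism of $\operatorname{range}(E_n)$ onto $\ell_\infty^{d_n}$ of distortion at most $4$, where $d_n=\abs{\mathcal{P}_n}-\abs{\mathcal{P}_{n-1}}$. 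Therefore $lip_0(K)$ is isomorphic to $\bigl(\sum_n\oplus\ell_\infty^{d_n}\bigr)_{c_0}$, which is isometrically $c_0\bigl(\bigsqcup_n\{1,\dots,d_n\}\bigr)$; since $K$ is infinite, $\abs{\mathcal{P}_n}\to\infty$, so $\sum_n d_n=\infty$ and this space is $c_0(\N)$. I expect the main obstacle to be the displayed two-sided estimate — and within it the upper bound, which forces one to combine the telescoping $f(z)-f(w)=\sum_{k> m}(E_kf(z)-E_kf(w))$ along the branches of the ball-tree with the pointwise control $\|E_nf\|_\infty\leq2^{-(n-1)}\|E_nf\|_L$; this is precisely the point where the ultrametric inequality and the dyadic choice of radii are used.
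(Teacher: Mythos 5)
Your duality half is essentially the paper's own argument: you apply the Petun{\={\i}}n--Pl{\={\i}}{\v{c}}ko theorem to $lip_0(K)$ (which for bounded $K$ is indeed the space $S$ of Theorem~\ref{properdual}), and you witness uniform separation of points by a rescaled indicator of a ball of radius $d(x,y)/2$; up to using the closed ball and splitting cases according to where $0$ lies, this is the same $2$-Lipschitz locally constant function as in the paper's proof. Where you genuinely diverge is the isomorphism $lip_0(K)\simeq c_0(\N)$. The paper is structural: ultrametric spaces satisfy Buneman's four-point condition, hence embed into an $\R$-tree; Matou\v{s}ek's extension operator makes $\F{K}$ complemented in the free space of the tree, which is an $L_1$-space by Godard; Lewis--Stegall then gives $\F{K}\simeq\ell_1(\N)$, and Kalton's embedding of $lip_0(K)$ into $c_0(\N)$ combined with Johnson--Zippin yields $lip_0(K)\simeq c_0(\N)$. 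You instead construct an explicit $c_0$-decomposition from the ball tree: nested partitions at radii $2^{-n}$, coherent centres, the norm-one averaging operators $L_n$ with $L_nL_m=L_{\min(n,m)}$ and $\|f-L_nf\|_L\le 2\omega_f(2^{-n})$, the differences $E_n$, the two-sided estimate with constants $\frac12$ and $5$ (your telescoping plus the pointwise bound $|E_kf(u)|\le 2^{-(k-1)}\|E_kf\|_L$ checks out), and the identification of $\mathrm{range}(E_n)$ with $\ell_\infty^{d_n}$ with distortion at most $4$; the only step you leave tacit is that this last map is onto, which follows since any function constant on the balls of $\mathcal{P}_n$ and vanishing on the central children satisfies $L_ng=g$ and $L_{n-1}g=0$, hence lies in $\mathrm{range}(E_n)$. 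Your route is self-contained and constructive: it avoids $\R$-trees, Matou\v{s}ek, Godard, Lewis--Stegall and Johnson--Zippin, produces an explicit finite-dimensional $c_0$-decomposition with uniform constants, and by duality exhibits $\F{K}$ as an explicit $\ell_1$-sum. The paper's argument is shorter given the quoted results, yields $\F{K}\simeq\ell_1(\N)$ as an explicit intermediate statement, and its ingredients (norm attainment plus the Kalton-type embedding) are exactly what is recycled for the proper, non-compact case in Theorem~\ref{Thmultrap}. Both arguments are correct.
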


\begin{proof}
It is proved in \cite{D} that for a compact metric space $(K,d)$, the space $lip_0(K)$ is a subset of $NA\left(\F{K}\right)$ and it is separating as soon as it separates points uniformly.

Let $(K,d)$ be a compact ultrametric space. To obtain the first part of the result it is enough to prove that $lip_0(K)$ separates points uniformly.

\noindent Let $x,y\in K$, set $a=d(x,y)$ and define $h:K\rightarrow \mathbb{R}$ as follows:
$$\forall z\in K, \ h(z)=d(x,y)\left(\mathbf{1}_{B(x,a/2)}(z)-\mathbf{1}_{B(x,a/2)}(0)\right)$$
where $\mathbf{1}_{B(x,a/2)}$ is the characteristic function of the open ball $B(x,a/2)$.

\noindent Then we have $h(0)=0$ and $|h(x)-h(y)|=d(x,y)$. We will compute the Lipschitz-constant of $h$: 

\noindent If $z,t$ are both in $B(x,a/2)$ or both outside $B(x,a/2)$, then $$|h(z)-h(t)|=0\leq 2d(z,t).$$

\noindent Take $z\in B(x,a/2)$ and $t\notin B(x,a/2)$, then 
$$d(z,t)=\max\{d(x,z), d(x,t)\}=d(x,t)\geq\cfrac{a}{2}=\cfrac{d(x,y)}{2}=\cfrac{|h(z)-h(t)|}{2}.$$

\noindent Hence the function $h$ is $2$-Lipschitz. 

\medskip

\noindent  To conclude we need to prove that $h\in lip_0(K)$. We will see that $\delta=\frac{a}{2}$ holds for every $\varepsilon$: 

\noindent Let $z,t\in K$ such that $d(z,t)<\frac{a}{2}$.

\noindent First, if $z\in B(x,a/2)$ then $B(x,a/2)=B(z,a/2)$ and because $d(z,t)<\frac{a}{2}$ we have $t\in B(x,a/2)$ and $h(z)=h(t)$.

\noindent Secondly, if $z\notin B(x,a/2)$ then $t$ cannot be in $B(x,a/2)$ and $h(z)=h(t)$.

\medskip

\noindent  This proves that $h$ is in $lip_0(K)$ so this space separates points uniformly and therefore this concludes the proof of the fact that $\F{K}$ is the dual space of $lip_0(K)$.

\medskip
 
\noindent  A result due to D.R. Lewis and C. Stegall \cite{LS} asserts that if a separable dual space is complemented in $L_1$, then it is isomorphic to $\ell_1(\N)$. So it follows from the background before the theorem that $\F{K}$ is isomorphic to $\ell_1(\N)$.

\bigskip
 
\noindent  Finally, Theorem 6.6 in \cite{K1} asserts that for a compact metric space $K$, the space $lip_0(K)$ is isomorphic to a subspace of $c_0(\N)$. Moreover, its dual is isomorphic to $\ell_1(\N)$, then Corollary 2 in \cite{JZ1} implies that $lip_0(K)$ is isomorphic to $c_0(\N)$.  \end{proof}

\noindent More generally for a proper ultrametric space we have the following:

\begin{thm}\label{Thmultrap}
Let $M$ be a proper ultrametric space and 
$$S=\left\{ f\in lip_0(M);  \ \lim\limits_{r\rightarrow +\infty}\sup\limits_{\substack{x \textrm{\ or\ } y \notin \overline{B}(0,r)\\ x\neq y}}\cfrac{f(x)-f(y)}{d(x,y)}=0  \right\}.$$
Then $\F{M}$ is isometrically isomorphic to $S^*$ and $S$ is isomorphic to $c_0(\N)$.
\end{thm}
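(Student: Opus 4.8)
The plan is to establish the two assertions in turn, building each from tools already available; I assume throughout that $M$ is infinite, the finite case being degenerate. For the identification $\F M\cong S^{*}$, recall that $S$ is a closed subspace of $\F M^{*}=Lip_0(M)$ and that $M$, being proper, is separable, hence so is $\F M$. Lemma \ref{NA} already gives $S\subseteq NA(\F M)$, so by Lemma \ref{separation} and the theorem of Petun{\={\i}}n and Pl{\={\i}}{\v{c}}ko \cite{PP} I only need to check that $S$ separates points uniformly, and for that I would copy the argument from the proof of Theorem \ref{compactultra}: for $x\neq y$ with $a=d(x,y)$ the function $h(z)=a\bigl(\mathbf{1}_{B(x,a/2)}(z)-\mathbf{1}_{B(x,a/2)}(0)\bigr)$ has $h(0)=0$, $\abs{h(x)-h(y)}=d(x,y)$, $\norm{h}_{L}\le2$, and $h\in lip_0(M)$. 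The one extra point is $h\in S$: since $B(x,a/2)$ has diameter at most $a/2$ it lies inside some $\overline{B}(0,r_{0})$, and for $r>r_{0}$ any pair $z\neq t$ with a point outside $\overline{B}(0,r)$ either satisfies $h(z)=h(t)$ or, by the strong triangle inequality, satisfies $d(z,t)>r$ while $\abs{h(z)-h(t)}\le a$, so the supremum defining $S$ is at most $a/r\to0$. This yields $\F M\cong S^{*}$ isometrically.

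Next I would record that $\F M\cong\ell_1(\N)$. Being ultrametric, $M$ has the $4$-point property, so it embeds isometrically into an $\R$-tree $T$; by Matou\v{s}ek's theorem \cite{M} there is a bounded linear extension operator $Lip_0(M)\to Lip_0(T)$, whence $\F M$ is complemented in $\F T$, which is an $L_1$-space by Godard \cite{Go}. Thus $\F M=S^{*}$ is a separable infinite-dimensional dual space complemented in $L_1$, and the Lewis--Stegall theorem \cite{LS} gives $\F M\cong\ell_1(\N)$.

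The core of the argument is to produce a good decomposition of $S$. I would rerun the construction of the operators $L_n$ from the proof that $\F M$ has the metric approximation property, but choosing for each $n$ the partition of $\overline{B}(0,n)$ into balls $\overline{B}(x_i^{(n)},1/n)$ consistently: $0$ is always one of the centres, and each centre at level $n$ remains a centre at level $n+1$ (keep the old centre for the sub-ball of the refinement through it, and take new centres for the other new balls). With this choice every function in the range of $L_n$ is constant on the level-$n$ balls and vanishes off $\overline{B}(0,n)$, hence lies in $lip_0(M)$ and, by an estimate like the one above, in $S$; so $L_n$ restricts to an operator $T_n:=L_n|_{S}$ on $S$ with $\norm{T_n}\le1$, $T_n^{2}=T_n$, nested ranges, and -- thanks to the consistent centres -- $T_mT_n=T_{\min(m,n)}$. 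Moreover $\norm{T_nf-f}_{L}\to0$ for each $f\in S$: one uses the little-Lipschitz condition to control pairs lying in a common level-$n$ ball or at distance at most $1/n$, and the decay condition defining $S$ to control pairs with a point outside $\overline{B}(0,n)$. Hence $(T_n)$ is the partial-sum sequence of a monotone finite-dimensional Schauder decomposition $(G_n)$ of $S$, with $G_n=(T_n-T_{n-1})S$. Under the identification $\F M=S^{*}$ the adjoints $T_n^{*}$ are the operators $R_n$ of that same proof, for which $R_n\gamma\to\gamma$ weakly; since $\F M\cong\ell_1(\N)$ has the Schur property this convergence is in norm, so $(G_n)$ is shrinking and $(G_n^{*})$ is a finite-dimensional Schauder decomposition of $\F M\cong\ell_1(\N)$.

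To conclude I would argue exactly as in the compact case. By Rosenthal's $\ell_1$-theorem every normalized block sequence of the decomposition $(G_n^{*})$ of $\ell_1(\N)$ spans $\ell_1$, and a gliding-hump argument makes the equivalence constant uniform; thus $\norm{\sum_n\xi_n}\asymp\sum_n\norm{\xi_n}$ for $\xi_n\in G_n^{*}$, and dualizing this gives $\norm{\sum_n x_n}\asymp\sup_n\norm{x_n}$ for $x_n\in G_n$, i.e. $S$ is isomorphic to $\bigl(\sum_n G_n\bigr)_{c_0}$. Embedding each finite-dimensional $G_n$ isometrically into a space $\ell_\infty^{N_n}$ then exhibits $S$ as a subspace of $c_0(\N)$, and since $S^{*}\cong\ell_1(\N)$, Corollary 2 of Johnson and Zippin \cite{JZ1} gives $S\cong c_0(\N)$. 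The step I expect to be the main obstacle is the construction of the decomposition: organising the centres so that $T_mT_n=T_{\min(m,n)}$, and proving $\norm{T_nf-f}_L\to0$ on $S$, which is precisely where the little-Lipschitz behaviour at small scales must be balanced against the decay condition at large scales. Everything else is a reassembly of results already in hand.
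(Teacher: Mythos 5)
Your first two steps are sound and coincide with the paper's: the duality $\F{M}\cong S^*$ is obtained exactly as in the paper (Lemmas \ref{NA} and \ref{separation}, Petun{\={\i}}n--Pl{\={\i}}{\v{c}}ko, and the function $h=d(x,y)(\mathbf{1}_{B(x,a/2)}-\mathbf{1}_{B(x,a/2)}(0))$, whose membership in $S$ you verify correctly), and $\F{M}\cong\ell_1(\N)$ via the $4$-points property, Matou\v{s}ek, Godard and Lewis--Stegall is also the paper's route. The final appeal to Johnson--Zippin is likewise the intended one. The gap is in the middle: you need $S$ to embed isomorphically into $c_0(\N)$, and your derivation of this from the decomposition does not work. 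Even granting the (plausible, and nicely arranged) construction of the commuting projections $T_n=L_n|_S$ with $\norm{T_nf-f}_L\to 0$ on $S$, the step ``by Rosenthal's theorem plus a gliding hump, every block sequence of $(G_n^*)$ in $\ell_1$ gives $\norm{\sum_n\xi_n}\asymp\sum_n\norm{\xi_n}$ uniformly'' is false as a general principle: Rosenthal only yields, for each individual normalized block sequence, a subsequence equivalent to the $\ell_1$ basis with a constant depending on that sequence, and no soft argument upgrades this to a uniform lower $\ell_1$ estimate for the whole FDD. Indeed, a conditional Schauder basis of $\ell_1$ (which exists by Pe{\l}czy{\'n}ski--Singer) is an FDD of $\ell_1$ with one-dimensional blocks for which such a uniform estimate would force unconditionality and equivalence to the unit vector basis -- a contradiction. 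Equivalently, on the predual side, a conditional basis of $c_0$ shows that a monotone shrinking FDD of a space with dual $\ell_1$ need not satisfy an upper $c_0$ estimate, so $S\cong\bigl(\sum_n G_n\bigr)_{c_0}$ does not follow. Nor can one bypass the estimate: having a shrinking FDD and dual isomorphic to $\ell_1$ does not by itself give an embedding into $c_0$ (Bourgain--Delbaen type $\mathcal{L}_\infty$-spaces have dual $\ell_1$ yet do not embed into $c_0$, by the very Johnson--Zippin result you invoke). So either you must prove the $c_0$-upper estimate for your specific multi-scale blocks directly -- which is not at all automatic, since consecutive scales $1/n$, $1/(n{+}1)$ are not lacunary -- or you need a different embedding.

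The paper supplies precisely this missing ingredient by a different device: adapting Kalton's Theorem 6.6, it shows (Lemma \ref{Lemma}) that for any $\varepsilon>0$ the space $S$ is $(1+\varepsilon)$-isometric to a subspace of $c_0(\N)$, via the map $f\mapsto\bigl((f(x_1)-f(x_2))/d(x_1,x_2)\bigr)$ indexed by a countable union of finite $2^{k-3}\varepsilon$-nets of the compact sets $C_{j,k}\subset M\times M$; membership of the image in $c_0$ uses exactly the two features of $S$ you identified (the little-Lipschitz condition at small scales and the decay condition at infinity), and the net estimate gives the lower bound $\norm{f}_L\le(1+\varepsilon)\norm{Tf}_\infty$. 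With that embedding in hand, your remaining steps ($S^*\cong\ell_1(\N)$ and Johnson--Zippin) complete the proof as in the paper. As written, however, your argument for the $c_0$-embedding has a genuine gap at its central step.
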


\begin{proof}
 It is possible to adapt the proof of Theorem 6.6 in \cite{K1} to obtain that the space $S$ is isomorphic to a subspace of $c_0(\N)$:

\begin{lem}\label{Lemma}
Let $M$ be a proper metric space. Then for any $\varepsilon >0$, the space $S$ is $(1+\varepsilon)$-isometric to a subspace of $c_0(\N)$.
\end{lem}

\begin{proof}
Assume $\varepsilon <1$ and consider the space $M\times M$ with the metric :
$$d((x_1,x_2),(y_1,y_2))=\max \left\{d(x_1,y_1), d(x_2,y_2)\right\}.$$
For every $j\in \N$ and $k\in \Z$ we consider the compact set 
$$C_{j,k}=\left\{(x_1,x_2)\in M\times M \ ;\ d(0,x_1)\leq 2^j \textrm{\ and \ } 2^k\leq d(x_1,x_2)\leq 2^{k+1}\right\}$$
and $F_{j,k}$ a finite $2^{k-3}\varepsilon$-net of $C_{j,k}$. Then $F:=\bigcup\limits_{\substack{j\in \N\\ k\in \Z}}F_{j,k}$ is countable.

We now define 
$$\begin{array}{rcl}T:S&\rightarrow &c_0(F)\\f&\mapsto&\left(\cfrac{f(x_1)-f(x_2)}{d(x_1,x_2)}\right)_{(x_1,x_2)\in F}.
\end{array}$$

\medskip

\noindent  Justify first that $Tf\in c_0(F)$ for $f\in S$:

Let $\alpha>0$. 

Because $f\in S$, in particular $f\in lip_0(M)$ and there exists $K \in \N$ such that for every $k\leq -K$, if $d(x_1,x_2)\leq 2^{k+1}$ then $\cfrac{|f(x_1)-f(x_2)|}{d(x_1,x_2)}\leq \alpha$. Thus for every $j\in \N$, every $k\leq -K$ and $(x_1,x_2)\in C_{j,k}$, we have $\cfrac{|f(x_1)-f(x_2)|}{d(x_1,x_2)}\leq \alpha$.

Moreover,  $\lim\limits_{r\rightarrow +\infty}\sup\limits_{\substack{x \textrm{\ or\ } y \notin \overline{B}(0,r)\\ x\neq y}}\cfrac{f(x)-f(y)}{d(x,y)}=0$, thus\! there\! exists\! $R\!>\!0$ such that $\forall r\geq R$, $\forall x\notin \overline{B}(0,r)$, $y\in M$, we have $\cfrac{|f(x)-f(y)|}{d(x,y)}\leq \alpha$. 

Let $N\in \N$ be such that $2^n\geq 2R$, $\forall n\geq N$.


If $(x_1,x_2)\in C_{j,k}$ with $j\geq N$ we clearly have $\cfrac{|f(x_1)-f(x_2)|}{d(x_1,x_2)}\leq \alpha$

 Assume now $(x_1,x_2)\in C_{j,k}$ with $k> N$ and $j\leq N$, then $$d(0,x_2)\geq d(x_1,x_2)-d(0,x_1)\geq2^k-R> R$$ that is $x_2\notin \overline{B}(0,R)$ and $\cfrac{|f(x_1)-f(x_2)|}{d(x_1,x_2)}\leq \alpha$.

\medskip

Finally, we obtain that $Tf\in c_0(F)$, for every $f\in S$.

\medskip

\noindent  Clearly $\|T\|\leq 1$. We will now show that $\|f\|_L\leq (1+\varepsilon)\|Tf\|_{\infty}$:

\noindent Let $y_1\neq y_2\in M$. There exists $j\in \N$ and $k\in \Z$ such that $(y_1,y_2)\in C_{j,k}$ and $(x_1,x_2)\in F_{j,k}$ such that $d((y_1,y_2),(x_1,x_2))\leq 2^{k-3}\varepsilon$. Then 
\begin{align*}
d(y_1,y_2)&\geq d(x_1,x_2)-d(x_1,y_1)-d(x_2,y_2)\geq d(x_1,x_2)-2^{k-2}\varepsilon\\
&\geq d(x_1,x_2)\left(1-\cfrac{\varepsilon}{4}\right).
\end{align*}
\noindent Let $f\in S$, 
\begin{align*}
\cfrac{|f(y_1)-f(y_2)|}{d(y_1,y_2)}&\leq \cfrac{|f(x_1)-f(x_2)|}{d(y_1,y_2)} + \cfrac{d(x_1,y_1)+d(x_2,y_2)}{d(y_1,y_2)}\|f\|_L\\
&\leq \cfrac{|f(x_1)-f(x_2)|}{d(y_1,y_2)} + \cfrac{\varepsilon}{4}\|f\|_L\\
&\leq \left(1-\cfrac{\varepsilon}{4}\right)^{-1}\cfrac{|f(x_1)-f(x_2)|}{d(x_1,x_2)}+\cfrac{\varepsilon}{4}\|f\|_L\\
&\leq  \left(1-\cfrac{\varepsilon}{4}\right)^{-1}\|Tf\|_{\infty}+\cfrac{\varepsilon}{4}\|f\|_L
\end{align*}
Finally, $\|Tf\|_{\infty}\leq\|f\|_L\leq (1+\varepsilon)\|Tf\|_{\infty}$ and one can conclude that $S$ is $(1+\varepsilon)$-isometric to a subspace of $c_0(\N)$.
\end{proof}

\noindent We now conclude the proof of Theorem \ref{Thmultrap}. 
We previously proved that in the case of a proper metric space, the space $S$ is a subspace of $NA(\F{M})$ and it is separating as soon as it separates points uniformly. Therefore in order to use Petun{\={\i}}n and Pl{\={\i}}{\v{c}}ko's result \cite{PP} (see also \cite{G1}) we only need to prove that, in the case of proper ultrametric space, the space $S$ separates points uniformly.

For given $x,y\in M$, the function $h$ defined as in proof of Theorem \ref{compactultra} satisfies $h\in lip_0(M)$, $|h(x)-h(y)|=d(x,y)$ and its Lipschitz constant does not depend on $x$ and $y$.

Let $r>0$ be such that $B(x,a/2)\subset B(0,r)$, with $a=d(x,y)$. We may and do assume that $d(z,0)>r$. 

\noindent First if $t\in \overline{B}(x,\frac{a}{2})$, then
$$	\cfrac{|h(z)-h(t)|}{d(z,t)}=\cfrac{d(x,y)}{d(z,t)} \substack{\longrightarrow\\{r\rightarrow +\infty}}\ 0.	$$
Secondly if $t\notin\overline{B}(x,\frac{a}{2})$, then
$$	\cfrac{|f(z)-f(t)|}{d(z,t)}= 0.	$$

\noindent Finally, we have $h\in S$, then $S$ separates points uniformly. We can conclude that $S^*$ is isometrically isomorphic to $\F{M}$.

\bigskip

\noindent  The second part of the proof follows the same line than the last part of the proof of Theorem \ref{compactultra}.
\end{proof}

\medskip

\begin{rem} B.R. Kloeckner proved in \cite{Kl} that the Wasserstein space of a compact ultrametric space is affinely isometric to a convex subset of $\ell_1(\N)$.
\end{rem}


\subsection*{Acknowledgments}
The author would like to thank Gilles Godefroy and Gilles Lancien for useful conversations and comments, and Henri Lombardi for asking the question of the properties of Lipschitz-free spaces over ultrametric spaces. Finally, the author is grateful to the referee for useful comments and suggestions which permitted to improve the results and the presentation of this paper.

\end{document}